\newtheorem{proposition}{Proposition}[section]
\theoremstyle{definition}
\newtheorem{definition}{Definition}[section]
\theoremstyle{remark}
\newtheorem{remark}{Remark}[section]
\journal{Examples and Counterexamples}
\begin{document}

\begin{frontmatter}


\title{Nonuniqueness of lattice Boltzmann schemes derived from finite difference methods}

\author[IANM,LBRG]{Eliane Kummer\corref{cor1}}
\affiliation[IANM]{
            organization={Institute of Applied and Numerical Mathematics, Karlsruhe Institute of Technology},
            addressline={Englerstr.~2},  
            city={Karlsruhe},
            postcode={76131},
            country={Germany}
            }
\affiliation[LBRG]{
            organization={Lattice Boltzmann Research Group, Karlsruhe Institute of Technology},
            addressline={Englerstr.~2},  
            city={Karlsruhe},
            postcode={76131},
            country={Germany}
            }    

\author[IANM,LBRG]{Stephan Simonis}
\cortext[cor1]{Corresponding author, Email: eliane.kummer@student.kit.edu}

\begin{abstract}
Recently, the construction of finite difference schemes from lattice Boltzmann schemes has been rigorously analyzed [Bellotti \textit{et al.}~(2022), Numer.\ Math.\ 152, pp.\ 1--40]. 
It is thus known that any lattice Boltzmann scheme can be expressed in terms of a corresponding multi-step finite difference scheme on the conserved variables. 
In the present work, we provide counterexamples for the conjecture that any multi-step finite difference scheme has a unique lattice Boltzmann formulation. 
Based on that, we indicate the existence of equivalence classes for discretized relaxation systems. 
\end{abstract}

\begin{keyword}
Lattice Boltzmann methods \sep finite difference schemes \sep relaxation systems \sep uniqueness 

\MSC 65M06 \sep 76M28

\end{keyword}

\end{frontmatter}


\section{Introduction}

Hilbert's sixth problem \cite{hilbert1902mathematical} is based on the idea of obtaining a more mathematical approach to physics than previously existent ones \cite{gorban2014hilbert,gorban2018hilbert}. 
Meanwhile, by axiomatization of formal physics, the analysis on existing physical theories has helped, for example in fluid mechanics, to rigorously move from the mesoscopic scale based on kinetic theory, to macroscopic continuum laws, usually in the form of classical partial differential equations. 
Specifically, solutions of the mesoscopic Boltzmann equation with Bhatnagar--Gross--Krook (BGK) collision \cite{bhatnagar1954model}, which abstracts the particle distribution of a fluid, have been linked to weak solutions of macroscopic incompressible Navier--Stokes equations \cite{saint-raymond2003bgk}. 
From an abstract point of view, the BGK collision can be described as relaxation, see for example \cite{bouchut2000diffusive,aregba-driollet2000discrete,yong2002basic} and references therein. 
For scalar, linear advection--diffusion equations in \(d\) dimensions, Simonis \textit{et al.}~\cite{simonis2020relaxation,simonis2022constructing,simonis2023lattice} have proved that deriving lattice Boltzmann methods in a top-down fashion---by construction of relaxation systems of BGK-type---is possible. 
The methodology is based on systematically building up a moment system structure in the relaxation system. 
Artificial variables (identified as moments) are subsequently introduced, which yields a specific form of the constructed relaxation system. 
However, parameter choices within this construction are mostly nonunique. 
From a conceptual perspective, this nonuniqueness is a valid observation, since uniquely constructable relaxation systems or schemes for given partial differential equations would imply a unique inverse path to Hilbert's model hierarchy. 
On the discrete level, Bellotti \textit{et al.}~\cite{bellotti2022finite} rigorously analyzed the formulation of a class of lattice Boltzmann schemes (LBSs) in terms of multi-step finite difference operators. 
Moreover, Bellotti~\cite{bellotti2022rigorous} observed that the reverse action (formulating finite difference schemes (FDSs) in terms of lattice Boltzmann equations) is not unique, which again matches the above-mentioned observation. 
The fact that different LBSs or akin schemes can yield the same FDS has also been indicated earlier by Dellacherie~\cite{dellacherie2014construction}. 
Notably, even if the FDSs derived from two distinct LBSs are the same, the behavior of these schemes at initialization may differ due to the mesoscopic disparity \cite{bellotti2024initialisation}, \textit{i.e.} the presence of nonconserved moments.

In this work, we provide explicit counterexamples for the statement that any multi-step FDS has a unique lattice Boltzmann formulation. 
Based on that, we demonstrate the nonuniqueness of equivalent FDSs for lattice Boltzmann methods, and indicate the existence of equivalence classes for discretized relaxation systems.

The notation is introduced in Section~\ref{sec:methodology}, the counterexamples are constructed and discussed in Section~\ref{sec:counterexample}, and the paper is closed with promising future research directions in Section~\ref{sec:conclusions}.

\section{Methodology}\label{sec:methodology}
We first recall the definitions and the proposition by Bellotti \textit{et al.}~\cite{bellotti2022finite} linking any LBS to a corresponding FDS. 

\begin{definition}[Lattice Boltzmann scheme (LBS)]\label{def:latticeBoltzmannScheme}
Let $\mathbf{M} \in \mathrm{GL}_{q}(\mathbb{R})$ be the moment matrix and $\mathbf{S}= \mathrm{diag}(0, \dots, 0, s_{N+1}, \dots, s_{q})$ the relaxation matrix where the first $N \in \mathbb{N}$ entries are zero and the other $q-N$ entries are such that $s_{i} \in (0,2]$ for $i \in \{N+1, \dots ,q\}$.
For 
\begin{align}
	\mathbf{T} &\coloneqq \mathbf{M} \,\mathrm{diag}\left(\mathsf{T}_{\triangle x}^{\bm{c}_{1}}, \dots, \mathsf{T}_{\triangle x}^{\bm{c}_{q}}\right) \mathbf{M}^{-1} \in \mathcal{M}_{q} \left(\mathcal{D}_{\triangle x}^{d}\right), \label{eq:Tmatrix} \\
\mathbf{A} &\coloneqq \mathbf{T} \left(\mathbf{I} -\mathbf{S} \right) \in \mathcal{M}_{q} \left(\mathcal{D}_{\triangle x}^{d}\right), \label{eq:Amatrix} \\
\mathbf{B} &\coloneqq \mathbf{T}\mathbf{S} \in \mathcal{M}_{q} \left(\mathcal{D}_{\triangle x}^{d}\right), \label{eq:Bmatrix}
\end{align}
the LBS equation can be formulated as 
\begin{align}\label{eq:latticeBoltzmannScheme}
    \bm{m}^{n+1}(\bm{x}) = \mathbf{A} \bm{m}^{n}(\bm{x})+ \mathbf{B} \bm{m}^{\mathrm{eq}}|^{n}(\bm{x}) \qquad \forall \bm{x} \in \mathcal{L}.
\end{align}
The notation above is as follows: \(\mathbf{I}\) is the identity matrix, \(\mathcal{M}_{q}\left( \mathcal{D}^{d}_{\triangle x}\right)\) is the set of square matrices of size \(q\) with entries in the set of finite difference operators \(\mathcal{D}^{d}_{\triangle x}\) (see \cite[Definition 3]{bellotti2022finite}) on the discrete lattice \(\mathcal{L} = \triangle x \mathbb{Z}^{d}\), \(\mathsf{T}^{\bm{z}}_{\triangle x}\) denotes a shift operator for \(\bm{z} \in \mathbb{Z}^{d}\) on \(\mathcal{L}\) (see \cite[Definition 1]{bellotti2022finite}), \(\bm{m}^{\mathrm{eq}}\colon \mathbb{R}^{N} \to \mathbb{R}^{q} \) are the equilibria of the moments \(\bm{m} \in \mathbb{R}^{q}\). 
\end{definition}

Let \(\mathbf{A}_{i}  \coloneqq \mathbf{A}_{\{i\} \cup \{N+1, N+2, \ldots , q \}}\) which maps the rows and columns of the corresponding indices in \(\mathbf{A}\) on a \(q\times q\) matrix filled with zeros, and let \(\mathbf{A}^{\diamond} \coloneqq \mathbf{A} - \mathbf{A}_{i}\). 
Based on Definition~\ref{def:latticeBoltzmannScheme}, it has been proven in \cite{bellotti2022finite} that each LBS corresponds to an FDS which is unique for \(N=1\) but nonunique for \(N>1\) due to the nonunique elimination of nonconserved moments by admissible additive splittings \(\mathbf{A} = \mathbf{A}_{i} + \mathbf{A}_{i}^{\diamond}\) \cite{bellotti2023numerical}.

\begin{proposition}[Corresponding FDS] \label{prop:lbsToFds}
Bellotti \textit{et al.}~\cite[Proposition 6]{bellotti2022finite}: 
Let the number of conserved moments be \(N\geq 1\). 
The LBS~\eqref{eq:latticeBoltzmannScheme}, rewritten for \(i=1,2,\ldots,N\) as 
\begin{align}
\bm{m}^{n+1} = \mathbf{A}_{i} \bm{m}^{n} + \mathbf{A}_{i}^{\diamond} \bm{m}^{n} + \mathbf{B}\bm{m}^{\mathrm{eq}}\vert^{n}, 
\end{align}
corresponds to a multi-step explicit FDS of \(m_{1}, m_{2}, \ldots, m_{N}\) of the form 
\begin{align}\label{eq:finiteDiffDef}
        m_{i}^{n+1} 
        =
         & - \sum_{k=0}^{q-N}\gamma_{i,k}m_{i}^{n-q+k+N} + \left(\sum_{k=0}^{q-N}\left(\sum_{l=0}^{k}\gamma_{i,q+1-N+l-k}\mathbf{A}_{i}^{l} \right)\mathbf{A}_{i}^{\diamond} \bm{m}^{n-k}\right)_{i} \nonumber\\
         &+ \left(\sum_{k=0}^{q-N}\left(\sum_{l=0}^{k}\gamma_{i,q+1-N+l-k} \mathbf{A}_{i}^{l}\right)  \mathbf{B} \bm{m}^{\mathrm{eq}}|^{n-k}\right)_{i} ,  
\end{align}
for \(i = 1, 2, \ldots, N\), where \( ( \gamma_{i, k} )^{k=q+1-N}_{k=0} \subset \mathcal{D}_{\triangle x}^{d}\) are the coefficients of the characteristic polynomial \(\chi_{\mathbf{A}_{i}} = X^{N-1} \sum_{k=0}^{k=q+1-N} \gamma_{i,k} X^{k}\) of \(\mathbf{A}_{i}\). 
\end{proposition}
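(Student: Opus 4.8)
The plan is to eliminate the nonconserved moments by a Cayley--Hamilton argument, applied not to the $q\times q$ matrix $\mathbf{A}_{i}$ but to the genuine principal submatrix of $\mathbf{A}$ indexed by $J\coloneqq\{i\}\cup\{N+1,\ldots,q\}$. First I would absorb the off-block dynamics into a forcing: with $\bm{g}^{n}\coloneqq\mathbf{A}_{i}^{\diamond}\bm{m}^{n}+\mathbf{B}\bm{m}^{\mathrm{eq}}|^{n}$ the scheme~\eqref{eq:latticeBoltzmannScheme} becomes $\bm{m}^{n+1}=\mathbf{A}_{i}\bm{m}^{n}+\bm{g}^{n}$. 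Let $\mathbf{P}$ be the $(q+1-N)\times q$ matrix extracting the components indexed by $J$, ordered so that $i$ comes first, and let $\mathbf{C}$ be the $J\times J$ principal submatrix of $\mathbf{A}$, equivalently of $\mathbf{A}_{i}$. Since $\mathbf{A}_{i}$ has vanishing rows and columns outside $J$, one has $\mathbf{P}\mathbf{A}_{i}=\mathbf{C}\mathbf{P}$, hence $\mathbf{P}\mathbf{A}_{i}^{l}=\mathbf{C}^{l}\mathbf{P}$ for all $l\ge0$; thus $\bm{v}^{n}\coloneqq\mathbf{P}\bm{m}^{n}$ obeys the closed recursion $\bm{v}^{n+1}=\mathbf{C}\bm{v}^{n}+\mathbf{P}\bm{g}^{n}$, whose first component is $m_{i}^{n}$. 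Block triangularity also gives $\chi_{\mathbf{A}_{i}}(X)=X^{N-1}\chi_{\mathbf{C}}(X)$, so $\chi_{\mathbf{C}}(X)=\sum_{k=0}^{q+1-N}\gamma_{i,k}X^{k}$ is monic of degree $q+1-N$ with exactly the coefficients $\gamma_{i,k}$ of the statement.

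Next I would iterate: a one-line induction gives the discrete Duhamel identity $\bm{v}^{n+k}=\mathbf{C}^{k}\bm{v}^{n}+\sum_{j=0}^{k-1}\mathbf{C}^{k-1-j}\mathbf{P}\bm{g}^{n+j}$. Forming $\sum_{k=0}^{q+1-N}\gamma_{i,k}\bm{v}^{n+k}$ and invoking Cayley--Hamilton over the commutative ring $\mathcal{D}_{\triangle x}^{d}$, the homogeneous part collapses to $\chi_{\mathbf{C}}(\mathbf{C})\bm{v}^{n}=\mathbf{0}$; isolating the leading term $\bm{v}^{n+q+1-N}$, whose coefficient $\gamma_{i,q+1-N}$ equals $1$, and shifting $n\mapsto n-q+N$ yields a closed $(q+1-N)$-step recursion expressing $\bm{v}^{n+1}$ through $\bm{v}^{n-q+N},\ldots,\bm{v}^{n}$ plus a double-sum forcing built from $\mathbf{P}\bm{g}$. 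Interchanging the two summations in the forcing and relabelling the inner index by the power of $\mathbf{C}$ turns the coefficient of $\mathbf{P}\bm{g}^{n-k}$ into $\sum_{l=0}^{k}\gamma_{i,q+1-N+l-k}\mathbf{C}^{l}$ and makes its time window close at level $n-(q-N)$ on its own. Finally, reading off the first component, using $\mathbf{C}^{l}\mathbf{P}=\mathbf{P}\mathbf{A}_{i}^{l}$ so that $\bigl(\mathbf{C}^{l}\mathbf{P}\bm{w}\bigr)_{1}=\bigl(\mathbf{A}_{i}^{l}\bm{w}\bigr)_{i}$, and splitting $\bm{g}^{m}=\mathbf{A}_{i}^{\diamond}\bm{m}^{m}+\mathbf{B}\bm{m}^{\mathrm{eq}}|^{m}$ reproduces the three terms of~\eqref{eq:finiteDiffDef} exactly (and the recursion is manifestly explicit, since every term on the right is evaluated at time level at most $n$).

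The laborious but routine portion is the index bookkeeping in interchanging and relabelling the double sums. The step that needs genuine care is the reduction to $\bm{v}=\mathbf{P}\bm{m}$ and $\mathbf{C}$: applying Cayley--Hamilton directly to the $q\times q$ matrix $\mathbf{A}_{i}$ yields only $\mathbf{A}_{i}^{N-1}\chi_{\mathbf{C}}(\mathbf{A}_{i})=\mathbf{0}$, which introduces in the forcing $N-1$ spurious extra time levels (reaching back to $n-q+1$ rather than $n-q+N$); these cancel only after one observes that $\chi_{\mathbf{C}}(\mathbf{A}_{i})$ has its range in the $N-1$ inert conserved directions and so contributes nothing to the $i$-th component. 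Working from the start with $\mathbf{C}$, so that the degree-$(q+1-N)$ polynomial $\chi_{\mathbf{C}}$ rather than $\chi_{\mathbf{A}_{i}}$ governs the $m_{i}$-recursion, bypasses this obstruction and is the structural core of the argument.
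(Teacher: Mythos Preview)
The paper does not supply its own proof of this proposition: it is stated verbatim as a quotation of \cite[Proposition~6]{bellotti2022finite}, with the surrounding text explicitly noting that the result ``has been proven in \cite{bellotti2022finite}''. There is therefore nothing in the present paper to compare your argument against.

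That said, your proposal is a correct and well-structured proof of the cited result. The key steps---passing to the principal submatrix $\mathbf{C}$ on the index set $J=\{i\}\cup\{N+1,\ldots,q\}$, using the intertwining $\mathbf{P}\mathbf{A}_{i}=\mathbf{C}\mathbf{P}$, applying Cayley--Hamilton to $\mathbf{C}$ over the commutative ring $\mathcal{D}_{\triangle x}^{d}$, and reading off the $i$-th component via $(\mathbf{C}^{l}\mathbf{P}\bm{w})_{1}=(\mathbf{A}_{i}^{l}\bm{w})_{i}$---are all sound, and your remark that working with $\mathbf{C}$ rather than $\mathbf{A}_{i}$ avoids $N-1$ spurious time levels is exactly the reason the characteristic polynomial appears in the statement with the prefactor $X^{N-1}$ stripped off. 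This is essentially the argument one finds in the original reference; the present paper simply takes it as given.
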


\section{Main result and counterexamples}\label{sec:counterexample}

We provide a counterexample to prove that Proposition~\ref{prop:lbsToFds} is not uniquely reversible, that is, one cannot uniquely recover a LBS from a given FDS of the form \eqref{eq:finiteDiffDef}. 
Subsequently, we illustrate our main result with several remarks and additional counterexamples.

\begin{proposition}[Corresponding LBS]\label{prop:fdsTolbs}
	From an FDS of the form \eqref{eq:finiteDiffDef}, the complete matrices \(\mathbf{A}\) and \(\mathbf{B}\) for an LBS~\eqref{eq:latticeBoltzmannScheme} cannot be uniquely recovered. 
\end{proposition}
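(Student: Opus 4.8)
The plan is to prove nonuniqueness by exhibiting two distinct lattice Boltzmann schemes in the sense of Definition~\ref{def:latticeBoltzmannScheme}---that is, two different triples \((\mathbf{M}, \mathbf{S}, \bm{m}^{\mathrm{eq}})\), hence two different pairs \((\mathbf{A}, \mathbf{B})\)---whose associated multi-step finite difference schemes \eqref{eq:finiteDiffDef} on the conserved moments coincide. Since Proposition~\ref{prop:lbsToFds} tells us the FDS is completely determined by the characteristic-polynomial coefficients \((\gamma_{i,k})\) of \(\mathbf{A}_i\) together with the combinations \(\mathbf{A}_i^l \mathbf{A}_i^{\diamond}\) and \(\mathbf{A}_i^l \mathbf{B}\) appearing there, it suffices to find two LBSs producing the same such data while differing in \(\mathbf{A}\) or \(\mathbf{B}\) as full \(q \times q\) matrices. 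The cleanest route is to take the smallest nontrivial setting with \(N \geq 2\) conserved moments (where Proposition~\ref{prop:lbsToFds} already flags nonuniqueness of the splitting \(\mathbf{A} = \mathbf{A}_i + \mathbf{A}_i^{\diamond}\)), for instance \(q = 3\) or \(q = 4\) with \(N = 2\), and write down an explicit example on a one-dimensional lattice \(d=1\).

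First I would fix a concrete velocity stencil \((\bm{c}_1, \dots, \bm{c}_q)\) and compute \(\mathbf{T}\) from \eqref{eq:Tmatrix} symbolically in the shift operators \(\mathsf{T}_{\triangle x}^{\pm 1}\). Next I would choose a moment matrix \(\mathbf{M}\), a relaxation matrix \(\mathbf{S} = \mathrm{diag}(0, 0, s_3, \dots, s_q)\), and an equilibrium \(\bm{m}^{\mathrm{eq}}\) linear in the conserved moments; this yields one LBS with matrices \((\mathbf{A}, \mathbf{B})\). Then I would construct a second candidate by modifying the data in a way that provably leaves the conserved-moment FDS invariant: the natural lever is the freedom in the non-conserved block---either a change of the nonconserved part of \(\mathbf{M}\) (a basis change on the nonconserved moments), or an adjustment of the nonconserved rows of the equilibrium, or a different admissible splitting---chosen so that the reduced matrix \(\mathbf{A}_i\) keeps the same characteristic polynomial and the same products \(\mathbf{A}_i^l \mathbf{A}_i^{\diamond}\), \(\mathbf{A}_i^l \mathbf{B}\) restricted to the \(i\)-th row. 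Finally I would verify by direct substitution into \eqref{eq:finiteDiffDef} that both LBSs generate the identical FDS, and separately check \((\mathbf{A}, \mathbf{B}) \neq (\mathbf{A}', \mathbf{B}')\), which establishes the claim.

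The main obstacle I anticipate is engineering the second LBS so that \emph{all} the ingredients of \eqref{eq:finiteDiffDef} are preserved simultaneously: it is easy to keep \(\chi_{\mathbf{A}_i}\) fixed (e.g.\ conjugating the nonconserved block by an invertible matrix leaves eigenvalues untouched), but the row-\(i\) projections of \(\mathbf{A}_i^l \mathbf{A}_i^{\diamond}\) and \(\mathbf{A}_i^l \mathbf{B}\) couple the conserved and nonconserved blocks, so a naive similarity transformation will generically perturb them. The trick will be to exploit that only the \(i\)-th component of these vectors enters, together with the zero structure of \(\mathbf{S}\) and of \(\bm{m}^{\mathrm{eq}}\), to show the perturbation lands in the kernel of the relevant projection---equivalently, to pick the modification inside the degrees of freedom that the derivation in \cite{bellotti2022finite} already quotients out. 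A safe fallback, if a slick similarity argument proves delicate, is to simply display two fully explicit small matrices with all entries written out and let a short computation do the verification; this is less elegant but airtight, and for a counterexample note that is entirely adequate. Either way, once the example is in hand, the remarks that follow can reinterpret it as identifying an equivalence class of discretized relaxation systems collapsing to one FDS.
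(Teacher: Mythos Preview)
Your overall strategy---exhibit two distinct LBSs that collapse to the same FDS---is of course the right one, and your fallback of writing out explicit small matrices would succeed. But two points deserve correction and comparison.

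First, your motivation for taking \(N\geq 2\) is backwards. The nonuniqueness of the splitting \(\mathbf{A}=\mathbf{A}_i+\mathbf{A}_i^{\diamond}\) flagged after Proposition~\ref{prop:lbsToFds} means that for \(N>1\) a \emph{single} LBS produces \emph{several} FDSs, depending on which conserved index \(i\) you eliminate around. That is nonuniqueness in the forward direction LBS\(\to\)FDS, not the backward direction you need. Worse, it muddies the very question: if the FDS attached to an LBS is not canonical, you must first say which FDS you are trying to invert. The paper avoids this by working with \(N=1\), where the FDS is unique and the counterexample is clean: two LBSs, one well-defined FDS.

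Second, the paper's actual argument is different in flavour from your basis-change plan. It fixes \(N=1\), the \(\mathrm{D}_1\mathrm{Q}_3\) stencil, and the specific relaxation \(\mathbf{S}=\mathrm{diag}(0,2,2)\), writes \(\mathbf{T}=(t_{ij})\) with generic entries, and expands the FDS~\eqref{eq:finiteDiffDef} explicitly. Matching two such FDSs amounts to eight scalar equations in the nine entries of \(\widetilde{\mathbf{T}}\): equality of the first rows of \(\mathbf{B}\), \(\mathbf{AB}\), \(\mathbf{A}^2\mathbf{B}\) (six conditions) plus equality of \(\chi_{\mathbf{A}}\) (three nontrivial coefficients, but one overlaps), leaving a one-parameter family of \(\widetilde{\mathbf{T}}\neq\mathbf{T}\) giving the same FDS. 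The choice \(s_2=s_3=2\) is what makes this nontrivial; with \(s_2=s_3=1\) the nonconserved moments decouple and the example is deemed trivial (Remark following the proof). Your conjugation idea---change basis on the nonconserved moments---is closer in spirit to the paper's Remark~\ref{rem:explicitMoments}, which on \(\mathrm{D}_1\mathrm{Q}_2\) parametrises a family \(\widetilde{\mathbf{M}}\) yielding identical FDSs via an explicit algebraic condition on the entries. So your instinct is sound, but you would do well to drop the \(N\geq 2\) detour, work with \(N=1\), and either count conditions as the paper does or carry out the \(\mathrm{D}_1\mathrm{Q}_2\) moment-matrix construction directly.
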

\begin{proof}
    We prove the proposition by assuming that each FDS uniquely corresponds to an LBS and providing a counterexample to said claim. 
    The counterexample is based on finding two different LBSs, meaning two different matrices $\mathbf{A}$, $\widetilde{\mathbf{A}}$ and corresponding $\mathbf{B}$, $\widetilde{\mathbf{B}}$ leading to the same FDS. 
    Consider an LBS on $\text{D}_1\text{Q}_3$ (one dimension and three discrete velocities), with only one conserved moment, $N=1$. 
    Consider the convention \eqref{eq:Tmatrix}, \eqref{eq:Amatrix}, \eqref{eq:Bmatrix} introduced by \cite{bellotti2022finite}. 
    From Proposition~\ref{prop:lbsToFds} we know that each LBS corresponds uniquely to an FDS of the form
\begin{align}
    m_{1}^{n+1} 
    = 
    & - \sum_{k=0}^{2} \gamma_{k}m_{1}^{n+k-2} 
      + \left( \sum_{k=0}^{2} \left( \sum_{l=0}^{k} \gamma_{3+l-k}\mathbf{A}^{l} \right) \mathbf{B}\bm{m}^{\mathrm{eq}}|^{n-k}\right)_{1}  \\
    = 
    & - \sum_{k=0}^{2} \gamma_{k} m_{1}^{n+k-2} 
      + \Bigg( \sum_{l=0}^{0} \left( \gamma_{3+l}\mathbf{A}^{l} \right) \mathbf{B} \bm{m}^{\mathrm{eq}}|^{n} \nonumber \\
    & + \sum_{l=0}^{1} \left( \gamma_{3+l-1}\mathbf{A}^{l} \right)\mathbf{B}\bm{m}^{\mathrm{eq}}|^{n-1} 
      + \sum_{l=0}^{2} \left( \gamma_{3+l-2}\mathbf{A}^{l} \right)\mathbf{B} \bm{m}^{\mathrm{eq}}|^{n-2} \Bigg)_{1} \\
    = 
    & - \gamma_{0}m_{1}^{n-2} 
      - \gamma_{1}m_{1}^{n-1} 
      - \gamma_{2}m_{1}^{n}  
      + \big(   \gamma_{3} \mathbf{B} \bm{m}^{\mathrm{eq}}|^{n} 
                + \left( \gamma_{2}\mathbf{I} + \gamma_{3}\mathbf{A} \right) \mathbf{B} \bm{m}^{\mathrm{eq}}|^{n-1} \nonumber \\ 
    & + (\gamma_{1}\mathbf{I} + \gamma_{2}\mathbf{A} + \gamma_{3} \mathbf{A}^{2})\mathbf{B}\bm{m}^{\mathrm{eq}}|^{n-2} \big)_{1}. \label{eq:counterFds}
\end{align}
Since we have $\gamma_{3}=1$, we obtain
\begin{align} \label{eq:reducedFDS}
    m_{1}^{n+1} 
    = 
    & - \gamma_{0}m_{1}^{n-2} 
      - \gamma_{1}m_{1}^{n-1} 
      - \gamma_{2}m_{1}^{n} 
      + \big(   \mathbf{B} \bm{m}^{\mathrm{eq}}|^{n} 
              + \left( \gamma_{2}\mathbf{I} + \mathbf{A} \right) \mathbf{B} \bm{m}^{\mathrm{eq}}|^{n-1} \nonumber \\ 
    & + (\gamma_{1}\mathbf{I} + \gamma_{2}\mathbf{A} +  \mathbf{A}^{2})\mathbf{B}\bm{m}^{\mathrm{eq}}|^{n-2} \big)_{1}. 
\end{align}
Let \(\mathbf{A}\) and \(\mathbf{B}\) determine the first LBS realization as described above. 
We now want to find different matrices $\widetilde{\mathbf{A}} \neq \mathbf{A}$ as well as $\widetilde{\mathbf{B}} \neq \mathbf{B}$ that fulfill this FDS~\eqref{eq:reducedFDS} on \(m_{1}\), since then we obtain $\widetilde{\mathbf{T}}(\mathbf{I}-\mathbf{S})  \neq \mathbf{T}(\mathbf{I}-\mathbf{S})$, which in turn implies $\widetilde{\mathbf{T}} \neq \mathbf{T}$. 
From this it follows that
\begin{align}
    \widetilde{\mathbf{M}} \mathrm{diag} \left( \mathsf{T}_{\triangle x}^{\bm{c}_{1}}, \dots, \mathsf{T}_{\triangle x}^{\bm{c}_{q}} \right) \widetilde{\mathbf{M}}^{-1}  \neq \mathbf{M} \mathrm{diag} \left(\mathsf{T}_{\triangle x}^{\bm{c}_{1}}, \dots, \mathsf{T}_{\triangle x}^{\bm{c}_{q}}\right) \mathbf{M}^{-1} .
    \label{eq:lastNeq}
\end{align}
In turn, \eqref{eq:lastNeq} implies that $\widetilde{\mathbf{M}} \neq \mathbf{M}$ and therefore we obtain two LBSs with differing moment matrices.

Let $s_{2}=s_{3}=2$ and $s_{1}=0$ (one conserved moment) define the relaxation matrix and let a moment transformation be compressed in \eqref{eq:Tmatrix}, respectively reading
\begin{align}
    \mathbf{S} 
    \coloneqq 
    \begin{bmatrix}
        0 & 0 & 0 \\
        0 & 2 & 0 \\
        0 & 0 & 2
    \end{bmatrix} \in \mathcal{M}_{3}(\mathbb{R}), 
    \qquad 
    \mathbf{T} 
     \coloneqq 
     \begin{bmatrix}
        t_{11} & t_{12} & t_{13} \\
        t_{21} & t_{22} & t_{23} \\
        t_{31} & t_{32} & t_{33}
    \end{bmatrix} \in \mathcal{M}_{3} \left(\mathcal{D}_{\triangle x}^{1}\right),
\end{align} 
with $t_{ij} \in \mathcal{D}_{\triangle x}^{1} $ and $i,j \in \{1, 2, 3\}$. 
Then we obtain from \eqref{eq:Amatrix} and \eqref{eq:Bmatrix}, respectively that 
\begin{align}
    \mathbf{A} 
    &\coloneqq 
   \begin{bmatrix}
        t_{11} & - t_{12} & - t_{13} \\
        t_{21} & - t_{22} & - t_{23} \\
        t_{31} & - t_{32} & - t_{33}
    \end{bmatrix}, \qquad
    \mathbf{B} 
    \coloneqq
    \begin{bmatrix}
        0 & 2 t_{12} & 2 t_{13} \\
        0 & 2 t_{22} & 2 t_{23} \\
        0 & 2 t_{32} & 2 t_{33}
    \end{bmatrix},
\end{align} where $\mathbf{A}, \mathbf{B} \in \mathcal{M}_{3} \left(\mathcal{D}_{\triangle x}^{1}\right)$.
If the following conditions hold for two different matrices $\mathbf{A}$ and $\widetilde{\mathbf{A}}$ and $\mathbf{B}$ and $\widetilde{\mathbf{B}}$, we obtain the same FDS.
Let in the following $\widetilde{\cdot}_{ij} \in \mathcal{D}_{\triangle x}^{1}$ with $i,j \in \{1, 2, 3\}$ denote the entries of \(\widetilde{\mathbf{T}}\) contained in $\widetilde{\mathbf{A}}$ and $\widetilde{\mathbf{B}}$, respectively.
\begin{enumerate}
    \item $\left ( \mathbf{B}\right )_{1} = \left ( \widetilde{\mathbf{B}}\right )_{1}$. 
        This is equivalent to 
        \begin{align}\label{Nontrivial Case Condition 1}
            t_{12} = \widetilde{t}_{12}, \qquad t_{13} = \widetilde{t}_{13}.
        \end{align}
    \item $\left ( \mathbf{A}\mathbf{B}\right )_{1} = \left (\widetilde{\mathbf{A}}\widetilde{\mathbf{B}}\right )_{1}$. 
        Since we have that
        \begin{align}
            \left ( \mathbf{A}\mathbf{B}\right )_{1} = 2
            \begin{pmatrix}
                0 & t_{11}t_{12} - t_{12}t_{22} - t_{13}t_{32} & t_{11}t_{13} - t_{12}t_{23} - t_{13}t_{33} 
            \end{pmatrix} , 
        \end{align} 
        this condition is equivalent to
        \begin{align}\label{Nontrivial Case Condition 2}
            t_{11}t_{12}-t_{12}t_{22} - t_{13}t_{32} &= \widetilde{t}_{11}\widetilde{t}_{12}-\widetilde{t}_{12}\widetilde{t}_{22} - \widetilde{t}_{13}\widetilde{t}_{32}, \\
            \label{Nontrivial Case Condition 3}
            t_{11}t_{13}-t_{12}t_{23} - t_{13}t_{33} &=
            \widetilde{t}_{11}\widetilde{t}_{13}-\widetilde{t}_{12}\widetilde{t}_{23} - \widetilde{t}_{13}\widetilde{t}_{33}.
        \end{align}
    \item $\left ( \mathbf{A}^{2}\mathbf{B}\right )_{1} = \left ( \widetilde{\mathbf{A}}^{2}\widetilde{\mathbf{B}}\right )_{1}$. 
        Since we have that
        \begin{align}
            \left ( \mathbf{A}^{2}\mathbf{B} \right )_{1} =  2
            \begin{pmatrix}
                0 & \sum_{i=1}^{3} \mathfrak{a}_{i} t_{i2} & \sum_{i=1}^{3} \mathfrak{a}_{i} t_{i3} 
            \end{pmatrix} ,
        \end{align} 
        where 
        \begin{align}
            \mathfrak{a}_{1} &= t_{11}^{2} - t_{12}t_{21} - t_{13}t_{31}, \\
            \mathfrak{a}_{2} &= - t_{11}t_{12} + t_{12}t_{22} + t_{13}t_{32}, \\
            \mathfrak{a}_{3} &= -t_{11}t_{13} + t_{12}t_{23} + t_{13}t_{33}, 
        \end{align}
        this condition is equivalent to
            \begin{align}
                & t_{12}(t_{11}^{2} - t_{12}t_{21} - t_{13}t_{31}) + t_{22}(- t_{11}t_{12} + t_{12}t_{22} + t_{13}t_{32}) 
                \nonumber\\
                & + t_{32}(-t_{11}t_{13} + t_{12}t_{23} + t_{13}t_{33}) 
                \nonumber\\
                & = \widetilde{t}_{12}(\widetilde{t}_{11}^{2} - \widetilde{t}_{12}\widetilde{t}_{21} - \widetilde{t}_{13}\widetilde{t}_{31}) + \widetilde{t}_{22}(- \widetilde{t}_{11}\widetilde{t}_{12} + \widetilde{t}_{12}\widetilde{t}_{22} + \widetilde{t}_{13}\widetilde{t}_{32}) 
                \nonumber \\
                & + \widetilde{t}_{32}(-\widetilde{t}_{11}\widetilde{t}_{13} + \widetilde{t}_{12}\widetilde{t}_{23} + \widetilde{t}_{13}\widetilde{t}_{33}) , \label{Nontrivial Case Condition 4} \\
                & t_{13}(t_{11}^{2} - t_{12}t_{21} - t_{13}t_{31}) + t_{23}(- t_{11}t_{12} + t_{12}t_{22} + t_{13}t_{32}) 
                \nonumber \\
                & + t_{33}(-t_{11}t_{13} + t_{12}t_{23} + t_{13}t_{33}) 
                \nonumber \\
                & = \widetilde{t}_{13}(\widetilde{t}_{11}^{2} - \widetilde{t}_{12}\widetilde{t}_{21} - \widetilde{t}_{13}\widetilde{t}_{31}) + \widetilde{t}_{23}(- \widetilde{t}_{11}\widetilde{t}_{12} + \widetilde{t}_{12}\widetilde{t}_{22} + \widetilde{t}_{13}\widetilde{t}_{32}) 
                \nonumber \\
                & + \widetilde{t}_{33}(-\widetilde{t}_{11}\widetilde{t}_{13} + \widetilde{t}_{12}\widetilde{t}_{23} + \widetilde{t}_{13}\widetilde{t}_{33}). \label{Nontrivial Case Condition 5}
             \end{align}
    \item Lastly, if the characteristic polynomials of $\mathbf{A}$ and $\widetilde{\mathbf{A}}$ coincide, we then obtain the same FDS.
    For $\mathbf{A}$ we obtain
    \begin{align}
        \chi_{\mathbf{A}} = \; &  \det(\mathbf{A} - \lambda \mathbf{I}) 
        \\
        = \; & - \lambda^3 - (-t_{11} + t_{22} + t_{33})\lambda^2 
        \nonumber
        \\
        & + (t_{11}t_{22} + t_{11}t_{33} - t_{22}t_{33} + t_{12}t_{21} + t_{13}t_{31} + t_{23}t_{32}) \lambda
        \nonumber
        \\
        & + \big[t_{13}t_{32}t_{21} + t_{12}t_{23}t_{31} + t_{13}t_{31}t_{22} 
        \nonumber
        \\
        & + t_{12}t_{21}t_{33} - t_{23}t_{32}t_{11} + t_{11}t_{22}t_{33}\big ] .
    \end{align}
    The coefficients of the normalized characteristic polynomial read
    \begin{align}
        \gamma_{3} 
        = \; & 1 , 
        \\
        \gamma_{2} 
        = \; & - t_{11} + t_{22} + t_{33} = - \mathrm{tr}(\mathbf{A}) , 
        \\
        \gamma_{1} 
        = \; & - \left ( (t_{11}t_{22} + t_{11}t_{33} - t_{22}t_{33} + t_{12}t_{21} + t_{13}t_{31} + t_{23}t_{32}) \right ) , 
        \\
        \gamma_{0}
        = \; & - \big[t_{13}t_{32}t_{21} + t_{12}t_{23}t_{31} + t_{13}t_{31}t_{22} 
        \nonumber
        \\
        & + t_{12}t_{21}t_{33} - t_{23}t_{32}t_{11} + t_{11}t_{22}t_{33}\big ]. 
    \end{align} 
    This means that if the following three conditions hold, then the characteristic polynomials of $A$ and $\widetilde{A}$ coincide
    \begin{align}
        & \mathrm{tr}(\mathbf{A}) = \mathrm{tr}(\widetilde{\mathbf{A}}), \label{Nontrivial Case Condition 6}\\
        & t_{11}t_{22} + t_{11}t_{33} - t_{22}t_{33} + t_{12}t_{21} + t_{13}t_{31} + t_{23}t_{32} \nonumber \\
        & = \widetilde{t}_{11}\widetilde{t}_{22} + \widetilde{t}_{11}\widetilde{t}_{33} - \widetilde{t}_{22}\widetilde{t}_{33} + \widetilde{t}_{12}\widetilde{t}_{21} + \widetilde{t}_{13}\widetilde{t}_{31} + \widetilde{t}_{23}\widetilde{t}_{32} , \label{Nontrivial Case Condition 7} \\
        & t_{13}t_{32}t_{21} + t_{12}t_{23}t_{31} + t_{13}t_{31}t_{22} + t_{12}t_{21}t_{33} - t_{23}t_{32}t_{11} + t_{11}t_{22}t_{33} \nonumber \\
        & = \widetilde{t}_{13}\widetilde{t}_{32}\widetilde{t}_{21} + \widetilde{t}_{12}\widetilde{t}_{23}\widetilde{t}_{31} + \widetilde{t}_{13}\widetilde{t}_{31}t_{22}  + \widetilde{t}_{12}\widetilde{t}_{21}\widetilde{t}_{33} - \widetilde{t}_{23}\widetilde{t}_{32}\widetilde{t}_{11} + \widetilde{t}_{11}\widetilde{t}_{22}\widetilde{t}_{33}.             \label{Nontrivial Case Condition 8}
    \end{align}
\end{enumerate}
    Thus, if for two different matrices $\mathbf{T}$ and $\widetilde{\mathbf{T}}$ the eight conditions \eqref{Nontrivial Case Condition 1}, \eqref{Nontrivial Case Condition 2}, \eqref{Nontrivial Case Condition 3}, \eqref{Nontrivial Case Condition 4}, \eqref{Nontrivial Case Condition 5}, \eqref{Nontrivial Case Condition 6},  \eqref{Nontrivial Case Condition 7}, and \eqref{Nontrivial Case Condition 8} hold, then we arrive at the same FDS, which proves the claim. 
\end{proof}

\begin{remark}[Equivalence classes]\label{rem:equivalence}
    To produce the same FDS, $\mathbf{A}$ and $\widetilde{\mathbf{A}}$ must have the same characteristic polynomial, so $\chi_{\mathbf{A}} = \chi_{\widetilde{\mathbf{A}}}$. 
    This is true for all matrices $\mathbf{A}$ and $\widetilde{\mathbf{A}}$ which are similar to one another. 
    The latter is the case, if $\mathbf{A}$ and $\widetilde{\mathbf{A}}$ lie in the same equivalence class. 
    Here, an equivalence class of matrices is defined as 
    \begin{align}
    	[\mathbf{R}] = \left\{\widetilde{\mathbf{R}} \in \mathcal{M}_{3} \left(\mathcal{D}_{\triangle x}^{1}\right) | \widetilde{\mathbf{R}} \sim \mathbf{R} \right\}, 
    \end{align}
    where 
    \begin{align}
    \widetilde{\mathbf{R}} \sim \mathbf{\mathbf{R}} 
    :\Longleftrightarrow
    \mathbf{R} = \mathbf{U} \widetilde{\mathbf{R}} \mathbf{V}, 
    \end{align}
    with \(\mathbf{U}, \mathbf{V} \in \mathrm{GL}_{3}(\mathcal{D}_{\triangle x}^{1})\).     
    It is left to investigate whether and how some or all of the above derived conditions can be reformulated in the form of an equivalence transformations from \(\widetilde{\mathbf{T}}\) to \(\mathbf{T}\) and thus from \(\widetilde{\mathbf{M}}\) to \(\mathbf{M}\). 
    However, assuming that we have general existence in the sense that any FDS can be written in the form \eqref{eq:finiteDiffDef}, general existence of nonunique LBSs in the equivalence class \([\mathbf{M}]\) would follow. 
\end{remark}

\begin{remark}[Trivial example]
    The choices of \(s_{2}, s_{3} \in (0, 2]\) can be made arbitrarily, however with increased computational effort which is planned for future work. 
    Notably, we would like to point out that our counterexample above is nontrivial in this range of relaxation frequencies in the following sense. 
    For the purpose of comparison, we provide a trivial counterexample as well. 
    Let $s_{2}=s_{3}=1$ and $s_{1} = 0$, such that \(\mathbf{S} = \mathrm{diag} ( 0, 1, 1) \in \mathcal{M}_{3}(\mathbb{R})\), and let \(\mathbf{T} = (t_{ij})_{ij} \in \mathcal{M}_{3} \left(\mathcal{D}_{\triangle x}^{1}\right)\) with $t_{ij} \in \mathcal{D}_{\triangle x}^{1} $ and $i,j \in \{1, 2, 3\}$.  
    Thus, we obtain
    \begin{align}
        \mathbf{A} 
        \coloneqq 
        \mathbf{T}(\mathbf{I}- \mathbf{S}) 
        =
        \begin{bmatrix}
            t_{11} & 0 & 0 \\
            t_{21} & 0 & 0 \\
            t_{31} & 0 & 0
        \end{bmatrix}, 
        \quad
        \mathbf{B} 
        \coloneqq
        \mathbf{T} \mathbf{S} 
        =
        \begin{bmatrix}
            0 & t_{12} & t_{13} \\
            0 & t_{22} & t_{23} \\
            0 & t_{32} & t_{33}
        \end{bmatrix},
    \end{align} where $\mathbf{A}, \mathbf{B} \in \mathcal{M}_{3} \left(\mathcal{D}_{\triangle x}^{1}\right)$.
    Since $s_{2}=s_{3}=1$, we relax on the equilibrium and have $\gamma_{1}= \gamma_{0}=0$, which yields
    \begin{align}
        m_{1}^{n+1} 
        = 
        & -\gamma_{2}m_{1}^{n} +  \big(\mathbf{B} \bm{m}^{\mathrm{eq}}|^{n}\big)_{1} + \big(\underbrace{(\gamma_{2}\mathbf{I}+\mathbf{A})\mathbf{B}}_{=0} \bm{m}^{\mathrm{eq}}|^{n-1}\big)_{1} \nonumber \\
        &  + \big(\underbrace{ (\gamma_{2}\mathbf{A} +  \mathbf{A}^{2})\mathbf{B}}_{=0}\bm{m}^{\mathrm{eq}}|^{n-2}\big)_{1} \\
        \label{Simplified moment equation}
        =
        & -\gamma_{2}m_{1}^{n} +  \mathbf{B} \bm{m}^{\mathrm{eq}}|_{1}^{n},
    \end{align} where $(\gamma_{2}\mathbf{I}+\mathbf{A})\mathbf{B}=0$ and $( \gamma_{2}\mathbf{A} +  \mathbf{A}^{2})\mathbf{B} = 0$ as a consequence of the Cayley--Hamilton theorem (see \cite{bellotti2022finite} and references therein). 
    When looking at \eqref{eq:counterFds}, we see that firstly, the matrices $\mathbf{A}$ and $\widetilde{\mathbf{A}}$ for two distinct LBSs leading to the same FDS must have the same characteristic polynomial. 
    Since we have
    \begin{align}
        \gamma_{2} = - \mathrm{tr}(\mathbf{A}) = - t_{11},
    \end{align}
    equation \eqref{Simplified moment equation} can be written as
    \begin{align}\label{simplified moment equation 2}
         m_{1}^{n+1} 
        & =
         t_{11}m_{1}^{n} + t_{12}m_{2}^{\mathrm{eq}}\vert^{n} + t_{13}m_{3}^{\mathrm{eq}}\vert^{n} .
    \end{align}
    This in turn means that the schemes will be equal if
    \begin{align}
        \label{Necessary Condition}
        t_{11} = \widetilde{t}_{11}, \quad  t_{12} = \widetilde{t}_{12}, \quad t_{13} = \widetilde{t}_{13}.
    \end{align}
    Therefore, the remaining entries of $\mathbf{A}$ and $\widetilde{\mathbf{A}}$ and $\mathbf{B}$ and $\widetilde{\mathbf{B}}$ respectively could differ, as long as $\mathbf{A}$ and $\widetilde{\mathbf{A}}$ fulfill the three conditions \eqref{Necessary Condition} (first rows of $\widetilde{\mathbf{T}}$ and $\mathbf{T}$ have to coincide), but these different matrices would still lead to the same FDS. 
    Since the LBSs resulting from \(\mathbf{T}\) and \(\widetilde{\mathbf{T}}\) and \(\mathbf{S}\) decouple the evolution equation for \(m_{1}\) from the higher order moment equations for \(m_{2}\) and \(m_{3}\), respectively, we regard this example as trivial. 
    As motivated by \cite{bellotti2022finite}, a comparison to systems of ordinary differential equations which can be arbitrarily extended with decoupled equations, underlines the triviality of decoupled moment equations in a relaxation system. 
    Note that, in the proof of Proposition~\ref{prop:fdsTolbs} we make a choice for \(\mathbf{S}\) which retains the dependence on \(m_{2}\) and \(m_{3}\) in \eqref{eq:reducedFDS} and can thus be regarded as nontrivial in this sense.     
\end{remark}

\begin{remark}[Particular LBS]\label{rem:explicitMoments}
    We further motivate the idea of using \([\mathbf{M}]\) proposed in Remark~\ref{rem:equivalence} to describe possible moment systems leading to the same FDSs with another nontrivial and more specific counterexample suitable for the proof of Proposition~\ref{prop:lbsToFds} on a smaller \(D_{1}Q_{2}\) velocity stencil. 
    Let \(s_{1}=0\), \(s_{2}=s\), and 
    \begin{align}
        \label{Moment Matrix Choice}
        \mathbf{M} = 
        \begin{bmatrix}
            1 & 1  \\
            1 & -1 
        \end{bmatrix} ,
        \qquad
        \widetilde{\mathbf{M}} = 
        \begin{bmatrix}
            \widetilde{m}_{11} & \widetilde{m}_{12}  \\
            \widetilde{m}_{21} & \widetilde{m}_{22}
        \end{bmatrix} ,
    \end{align}
    and assume that the equilibria are linear functions of the conserved moments, 
    \begin{align}
        m_{2}^{\mathrm{eq}} = \epsilon m_{1} , \qquad \widetilde{m}_{2}^{\mathrm{eq}} = \widetilde{\epsilon} \widetilde{m}_{1},  
    \end{align}
    respectively. 
    Using \cite[Equation (9.2)]{bellotti2023numerical}, we can see that the LBS matrices 
    \begin{align}
        \mathbf{E} = \mathbf{A} + \mathbf{B} \begin{pmatrix} \epsilon \\ \epsilon \end{pmatrix} \otimes \bm{e}_{1}, \qquad \widetilde{\mathbf{E}} = \widetilde{\mathbf{A}} + \widetilde{\mathbf{B}} \begin{pmatrix} \widetilde{\epsilon} \\ \widetilde{\epsilon} \end{pmatrix} \otimes \bm{e}_{1}, 
    \end{align}
    will produce the same FDS, if the characteristic polynomials are equal. 
    Thus, using \eqref{eq:Tmatrix} in Fourier space 
    \begin{align}
        \widehat{\mathbf{T}} = \mathbf{M} \mathrm{diag}\left( e^{-\mathsf{i} \theta}, e^{\mathsf{i} \theta} \right) \mathbf{M}^{-1},
    \end{align} 
    we compute \eqref{eq:Amatrix} and \eqref{eq:Bmatrix}, equate the characteristic polynomials \(\chi_{\widehat{\mathbf{E}}} = \chi_{\widehat{\widetilde{\mathbf{E}}}}\), assume \(\widetilde{\epsilon}=\epsilon\) for simplicity, solve the resulting system of equations, and finally obtain a condition 
    \begin{align}\label{eq:finalMcondition}
        \widetilde{m}_{11} = \frac{\widetilde{m}_{12} \widetilde{m}_{21} (\epsilon + 1)}{\epsilon \widetilde{m}_{22} + 2 \widetilde{m}_{12} \epsilon - \widetilde{m}_{22}}
    \end{align}   
    for fixed \(\widetilde{m}_{12}, \widetilde{m}_{21}, \widetilde{m}_{22}, \epsilon \). 
    Thus, we have constructed nonunique LBSs in \([\mathbf{M}]\) generated by \eqref{eq:finalMcondition} which all lead to the same FDS according to Proposition~\ref{prop:lbsToFds}. 
    In particular, for the $D_{1}Q_{2}$ velocity stencil, we have $c_{1} = 1, c_{2}=-1$ and in turn
\begin{align}\label{eq:Tdefin}
    \mathbf{T} = \mathbf{M} \mathrm{diag}\left( \mathsf{x}, \overline{\mathsf{x}} \right) \mathbf{M}^{-1},
\end{align}
where $\mathsf{x} \eqcolon \mathsf{T}_{\triangle x}^{c_{1}} = \mathsf{T}_{\triangle x}^{1}$ and $\overline{\mathsf{x}} \eqcolon \mathsf{T}_{\triangle x}^{c_{2}} = \mathsf{T}_{\triangle x}^{-1} $, see \cite{bellotti2022finite}. 
For the sake of clarity, we adopt the notation from \cite{bellotti2022finite}, where the definition of a generic shift operator for a given function $f$ and \(\bm{z} \in \mathbb{Z}^{d}\) is 
\begin{align}
    \left(\mathsf{T}_{\triangle x}^{\bm{z}}f \right)(\bm{x}) = f(\bm{x} - \bm{z} \triangle x).
\end{align}
In case that \(d=1\) and \(z=1\), we have 
\begin{align}
    \left(\mathsf{T}_{\triangle x}^{1}f\right)(x) & = f(x -  \triangle x), \\
    \left(\mathsf{T}_{\triangle x}^{-1}f\right)(x) & = f(x +  \triangle x). 
\end{align}
For the moment matrix $\mathbf{M}$ as in \eqref{Moment Matrix Choice}
we obtain 
\begin{align}
    \mathbf{T} 
    & = 
    \mathbf{M} 
    \begin{bmatrix}
        \mathsf{x} & 0 \\
        0 & \overline{\mathsf{x}}
    \end{bmatrix}
    \mathbf{M}^{-1}  \\
    & =
    \begin{bmatrix}
        1 & 1 \\
        1 & -1
    \end{bmatrix}
    \begin{bmatrix}
        \mathsf{x} & 0 \\
        0 & \overline{\mathsf{x}}
    \end{bmatrix} \frac{1}{2}
    \begin{bmatrix}
        1 & 1 \\
        1 & -1
    \end{bmatrix} \\
    & = \frac{1}{2}
    \begin{bmatrix}
       \mathsf{x} + \overline{\mathsf{x}} & \mathsf{x} - \overline{\mathsf{x}} \\
        \mathsf{x} - \overline{\mathsf{x}} & \mathsf{x} + \overline{\mathsf{x}}
    \end{bmatrix} . 
\end{align}
For the moment matrix $\mathbf{\widetilde{M}}$ as in \eqref{Moment Matrix Choice}, where $\widetilde{m}_{11}\widetilde{m}_{22} \neq \widetilde{m}_{12}\widetilde{m}_{21}$, we obtain an explicit expression for $\widetilde{\mathbf{T}}$ as well, 
\begin{align}
    \widetilde{\mathbf{T}} 
    &= 
    \widetilde{\mathbf{M}}
    \begin{bmatrix}
        \mathsf{x} & 0 \\
        0 & \overline{\mathsf{x}}
    \end{bmatrix}
    \widetilde{\mathbf{M}}^{-1} \\
    &= 
    \begin{bmatrix}
        \widetilde{m}_{11} & \widetilde{m}_{12} \\
        \widetilde{m}_{21} & \widetilde{m}_{22}
    \end{bmatrix}
    \begin{bmatrix}
        \mathsf{x} & 0 \\
        0 & \overline{\mathsf{x}}
    \end{bmatrix} 
    \frac{1}{\widetilde{m}_{11}\widetilde{m}_{22}-\widetilde{m}_{12}\widetilde{m}_{21}}
    \begin{bmatrix}
        \widetilde{m}_{22} & -\widetilde{m}_{12} \\
        -\widetilde{m}_{21} & \widetilde{m}_{11}
    \end{bmatrix} \\
    &= \frac{1}{\widetilde{m}_{11}\widetilde{m}_{22}-\widetilde{m}_{12}\widetilde{m}_{21}}
    \begin{bmatrix}
        \widetilde{m}_{11}\mathsf{x} & \widetilde{m}_{12}\overline{\mathsf{x}} \\
        \widetilde{m}_{21}\mathsf{x} & \widetilde{m}_{22}\overline{\mathsf{x}}
    \end{bmatrix}
    \begin{bmatrix}
        \widetilde{m}_{22} & -\widetilde{m}_{12} \\
        -\widetilde{m}_{21} & \widetilde{m}_{11}
    \end{bmatrix}
    \\
    &= \frac{1}{\widetilde{m}_{11}\widetilde{m}_{22}-\widetilde{m}_{12}\widetilde{m}_{21}}
    \begin{bmatrix}
        \widetilde{m}_{11}\widetilde{m}_{22}\mathsf{x} -  \widetilde{m}_{12}\widetilde{m}_{21}\overline{\mathsf{x}}
        &
        -\widetilde{m}_{11}\widetilde{m}_{12}\mathsf{x} +  \widetilde{m}_{12}\widetilde{m}_{11}\overline{\mathsf{x}}
        \\
        \widetilde{m}_{21}\widetilde{m}_{22}\mathsf{x} -  \widetilde{m}_{22}\widetilde{m}_{21}\overline{\mathsf{x}}
        &
        -\widetilde{m}_{21}\widetilde{m}_{12}\mathsf{x} +  \widetilde{m}_{22}\widetilde{m}_{11}\overline{\mathsf{x}}
    \end{bmatrix}
\end{align}
Notably, further equations for explicitly computing the components of \(\widetilde{\mathbf{M}}\) can be derived with the help of $\mathbf{A}$, $\mathbf{B}$ and \(\widetilde{\mathbf{A}}\), \(\widetilde{\mathbf{B}}\) for \(\mathbf{T}\) and \(\widetilde{\mathbf{T}}\), respectively. 
From equating the resulting FDSs, we obtain similar conditions as in the proof of Proposition~\ref{prop:fdsTolbs}: equal characteristic polynomials of \(\mathbf{A}\) and \(\widetilde{\mathbf{A}}\), coinciding first rows of \(\mathbf{B}\) and \(\widetilde{\mathbf{B}}\), coinciding first rows of \(\mathbf{A}\mathbf{B}\) and \(\widetilde{\mathbf{A}}\widetilde{\mathbf{B}}\). 
Based on the resulting three equations, and the condition \eqref{eq:finalMcondition}, we hence are able to explicitly construct two matrices $\mathbf{T}$ and $\widetilde{\mathbf{T}}$ with the same FDS but differing corresponding LBSs. 
\end{remark}

\section{Conclusions}\label{sec:conclusions}
In this article, we have proven that, given an FDS of the form \eqref{eq:finiteDiffDef}, its resulting LBS is not unique (cf.\ Proposition~\ref{prop:fdsTolbs}). 
Our observation is based on specific counterexamples, which motivate the definition of equivalence classes of moment matrices used in LBSs. 
These classes could further categorize the nonunique relaxation systems in the construction approach proposed by Simonis \textit{et al.}~\cite{simonis2020relaxation,simonis2022constructing,simonis2023lattice}. 
In future studies, we will investigate the construction of specific equivalence classes of relaxation systems for lattice Boltzmann methods derived from finite difference methods. 
To classify our present findings we challenge the notion of equivalence used in this work. 
To state whether two numerical methods are equivalent (or not), we distinguish between two mapping types of LBS equations. 
The first approach is based on mappings between schemes by using matrix-based transformations, which we primarily consider in this paper. 
The second approach to be mentioned is based on changes of variables of the schemes by using substitutions. 
We do not consider these reformulations within the present work and instead refer the interested reader for example to the work of He \textit{et al.}~\cite{he1998novel} where a numerical integration along characteristics and a trapezoidal rule lead to an implicit scheme that is made explicit by a change of variables, yielding a classical LBS. 
At last, we would like to point out that the possibility of deriving a class of LBSs from a specific FDS, where all schemes in this class are linked by changes of variables, still persists. 
Hence, to prove that the LBSs in this class are different also from this perspective, in future studies it would be necessary to show that no variable changes, transforming the LBSs into one another, exist. 
Besides, using numerical experiments should be considered in future work to underline the theoretical findings.

\section*{Acknowledgments}
The authors thank T.\ Bellotti for valuable discussions and comments. 
S.\ Simonis acknowledges support from KHYS at KIT through a Networking Grant.

\section*{Author contribution}
\textbf{E.\ Kummer}: 
Conceptualization, 
Methodology, 
Validation, 
Formal analysis, 
Investigation, 
Writing - Original Draft, 
Writing - Review \& Editing; 
\textbf{S.\ Simonis}: 
Conceptualization, 
Methodology, 
Validation, 
Formal analysis, 
Investigation, 
Resources, 
Writing - Original Draft, 
Writing - Review \& Editing, 
Supervision, 
Project administration, 
Funding acquisition.
All authors have read and approved the final version of this manuscript.



\end{document}